\newtheorem{theorem}{Theorem}
\newtheorem{problem}{Problem}
\newtheorem{lemma}{Lemma}
\newtheorem{remark}{Remark}
\newtheorem{definition}{Definition}
\newcommand{\ZZ}{\mathbb{Z}}
\newcommand{\RR}{\mathbb{R}}
\newcommand{\mlattice}{\Pi_{N,\mathbf{v}}}
\newcommand{\mlatticee}{\Pi_{N,(a,b)}}
\newcommand{\mmlattice}{\Pi_{N,\mathbf{v}'}}
\newcommand{\mlatticeI}{\Pi^4_{N,\mathbf{v}}}
\newcommand{\mlatticeII}{\Pi^5_{N,\mathbf{v}}}
\newcommand{\covol}{\mathrm{vol}}
\title{{\large Long shortest vectors in low dimensional lattices}}
\author{Florian Pausinger}
\address{School of Mathematics \& Physics, Queen's University Belfast, BT7 1NN, Belfast, United Kingdom.}
\email{f.pausinger@qub.ac.uk}
\keywords{shortest vector; shortest distance; lattice; sequences (mod $m$).}
\subjclass[2010]{Primary (11P21, 52C07); Secondary (11H31, 11B50)}
\date{}
\begin{document}

\maketitle


\begin{abstract}
For coprime integers $N,a,b,c$, with $0<a<b<c<N$, we define the set
$$ \{ (na \! \! \! \! \pmod{N}, nb \! \! \! \! \pmod{N}, nc \! \! \! \! \pmod{N}) : 0 \leq n < N\}. $$
We study which parameters $N,a,b,c$ generate point sets with long shortest distances between the points of the set in dependence of $N$ and relate such sets to lattices of a particular form.
As a main result, we present an infinite family of such lattices with the property that the normalised norm of the shortest vector of each lattice converges to the square root of the Hermite constant $\gamma_3$. We obtain a similar result for the generalisation of our construction to $4$ and $5$ dimensions.
\end{abstract}

\section{Introduction}

We are interested in point sets
$$ \mlattice:=\{ (na \! \! \! \! \pmod{N}, nb \! \! \! \! \pmod{N}, nc \! \! \! \! \pmod{N}) : 0 \leq n < N\}, $$
in which $\mathbf{v} =(a,b,c)$ such that $0<a<b<c<N$ and $\gcd(N,a)=1$, which ensures that we have $N$ different points in the cube $[0,N-1]^3$.
Defining the shortest distance between points in $\mlattice$ as
$$ \lambda^{\ast}(\mlattice):= \min_{\substack{\mathbf{x} \neq\mathbf{y}, \\ \mathbf{x},\mathbf{y} \in \mlattice}} \| \mathbf{x} - \mathbf{y} \|, $$
we ask:  
\begin{enumerate}
\item[(Q1)] How long can the shortest distance between points of such sets be in dependence of $N$? 
\item[(Q2)] How to explicitly construct sets with long shortest distances?
\end{enumerate}
We start with simple bounds on $\lambda=\lambda^{\ast}(\mlattice)$. By the definition of our sets, the shortest possible distance between two points is $\sqrt{6}$ for a difference vector containing $1,-1,\pm 2$. On the other hand, assume that $\lambda$ is a shortest distance. Our point sets always contain $(0,0,0)$ and are always contained in the cube $[0,N-1]^3$. Therefore, a ball of radius $\lambda$ centered at $(0,0,0)$ (resp. its intersection with the cube $[0,N-1]^3$ which is $1/8$ of the ball) will never contain any other point of the set. In particular, we can attach such a fraction of a ball of radius $\lambda$ to any point in the set. This yields a simple upper bound on $\lambda$ since we know that our point set is contained in a cube of volume $(N-1)^3$ and we can attach a fraction of an empty ball of radius $\lambda$ to every point. Therefore,
\begin{equation} \label{heuristic1}
\sqrt{6} \leq \lambda^{\ast}(\mlattice) \leq C \cdot N^{2/3},
\end{equation}
for a constant $C >0$.

The main aim of this note is to answer (Q1) and (Q2). 
We provide an upper bound for the maximal, normalised shortest distance between two points in sets of the form $\mlattice$ and we relate these sets to particular lattices. 
Furthermore, in Theorem \ref{thm2} we present a parametrised family of such lattices whose normalised shortest distance converges (from below) to $C$ as the parameter goes to infinity. 

In the following we recall some basic notions and relate our point sets to lattices in $\RR^3$ before we state our main results and discuss previous work in Section \ref{sec:main}. Section \ref{sec:lattice} contains a detailed discussion of numerical results as well as of a remarkable lattice while Section \ref{sec:proofs} contains the proofs our main results.
Finally, in Section \ref{sec:multiD} we discuss the extension of our construction to dimensions $4$ and $5$.

\subsection{Lattices and reduced bases}
We introduce the main concepts surrounding lattices and refer to \cite{CS} for more information.
Let $\mathbf{V}=\{\mathbf{v}_1, \ldots, \mathbf{v}_d\}$ be linearly independent vectors in $\RR^d$. The lattice generated by $\mathbf{V}$ is the set
$$ \Lambda(\mathbf{V}) = \left\{ \sum_{i=1}^{d} x_i\cdot \mathbf{v}_i: x_i \in \ZZ \right\} = \{ \mathbf{V} \cdot \mathbf{x}: \mathbf{x} \in \ZZ^d \} $$
of all integer linear combinations of the vectors in $\mathbf{V}$, and the set $\mathbf{V}$ is called a basis for the lattice. Two bases $\mathbf{V}, \mathbf{V}'$ generate the same lattice, i.e. $\Lambda(\mathbf{V})=\Lambda(\mathbf{V'})$, if and only if there exists an integer unimodular matrix $\mathbf{U}$ such that $\mathbf{V}=\mathbf{V}' \mathbf{U}$.
This is also the reason why it makes sense to define the \emph{(co)volume} of a lattice, i.e.
$$ \covol(\Lambda(\mathbf{V})) = | \det M_\mathbf{V} |, $$
in which $M_\mathbf{V}$ is the square matrix whose rows are the basis vectors $\mathbf{v}_i$, $1\leq i \leq d$ of an arbitrary lattice basis $\mathbf{V}$.
For $\Lambda=\Lambda(\mathbf{V})$ and $1 \leq i \leq d$ the \emph{$i$-th successive minimum} $\lambda_i(\Lambda)$ is the radius of the smallest closed ball centred at the origin and containing at least $i$ linearly independent lattice vectors. In particular, $\lambda_1(\Lambda)$ is the length of the shortest vector of $\Lambda$.
Finally, the \emph{Hermite constant} $\gamma_d$ is (can be) defined as 
$$  \sqrt{\gamma_d}= \sup_{\Lambda} \frac{\lambda_1(\Lambda)}{\covol(\Lambda)^{1/d}},$$
in which the supremum is over all $d$-dimensional lattices $\Lambda$.
Thus, the Hermite constant is an upper bound on the maximal length of short lattice vectors in Euclidean space and is known only for $1\leq d \leq 8$ as well as $d=24$; in particular 
$$\gamma_2=2/\sqrt{3}, \ \ \gamma_3=2^{1/3}, \ \ \gamma_4=2^{1/2} \ \ \text{ and } \ \ \gamma_5=2^{3/5}.$$
Now we restrict to three dimensional lattices. 
\begin{definition} \label{reduced}
A basis $\mathbf{V}=\{\mathbf{v}_1, \mathbf{v}_2, \mathbf{v}_3 \}$ of a three dimensional lattice in $\RR^3$ is \emph{(Minkowski-) reduced} if its vectors satisfy:
\begin{enumerate}
\item[(B.1)] $\|\mathbf{v}_1 \| \leq \|\mathbf{v}_2 \| \leq  \|\mathbf{v}_3 \|$;
\item[(B.2)] $\|\mathbf{v}_2 + x_1 \mathbf{v}_1 \| \geq \| \mathbf{v}_2\| $ and $\| \mathbf{v}_3 + x_2 \mathbf{v}_2 + x_1 \mathbf{v}_1 \| \geq \| \mathbf{v}_3\| $ for all integers $x_1, x_2$.
\end{enumerate}
\end{definition}
We refer to \cite{semaev} for a three-dimensional lattice reduction algorithm and to \cite{stehle} for a characterisation of Minkowski-reduced lattice bases in arbitrary dimension $d$. Importantly, the shortest vector in a reduced basis is always the shortest vector of the lattice.
In fact, a basis of a $d$-dimensional lattice that reaches the $d$ minima must be Minkowski-reduced, but a Minkowski-reduced basis may not reach all the minima, except the first four (see \cite{waerden}).

\subsection{The underlying lattice of $\mlattice$}
A first important observation is that our point sets can be obtained as intersections of the cube $[0,N-1]^3$ with three-dimensional integer lattices and the shortest distance in our point sets is given by the shortest vector of the underlying lattices. (Note however that the shortest vector of the underlying lattice need not be an element of our sets!)
A second observation is that the shortest distance of a point set does not change when we change the order of the generating vector; i.e. $\lambda^{\ast}(\mlattice)=\lambda^{\ast}(\mmlattice)$ if $\mathbf{v}=(a,b,c)$ and $\mathbf{v}'=(a,c,b)$ or any other permutation of $a,b,c$. Hence, we can assume $a<b<c$.
In particular, our assumptions on $N,a,b,c$ allow us to restrict to generating vectors of the form $(1,b,c)$, since we can always solve the congruence 
\begin{equation} \label{restriction} a \cdot n' \equiv 1 \pmod{N}. \end{equation} 
Therefore, we can replace the generating vector $(a,b,c)$ with $(n'a,n'b,n'c)=(1,b',c')$ in which $n'$ is such that $n'a \equiv 1 \pmod{N}$, $n'b \equiv b' \pmod{N}$ and $n'c \equiv c' \pmod{N}$ generate the same set of points. Restricting to generating vectors of the form $(1,b,c)$ gives a particularly simple generating matrix for the underlying lattices.

\begin{lemma} \label{lem:basis}
Let $\mathbf{v}=(1,b,c)$ with $1 < b<c < N$. Then
\begin{equation} \label{modLattice} \mlattice = \begin{pmatrix} 0 & 0 & 1 \\ 0 & N & b \\ N & 0 & c \end{pmatrix} \ZZ^3 \cap [0,N-1]^3 \end{equation}
\end{lemma}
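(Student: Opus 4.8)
The plan is to prove the set equality by explicitly parametrising the lattice points that land inside the cube. Let $M$ denote the displayed $3\times 3$ matrix. First I would compute a general element of $M\ZZ^3$: writing $\mathbf{x}=(x_1,x_2,x_3)^{\mathsf T}\in\ZZ^3$, a direct multiplication gives
\[ M\mathbf{x} = \bigl(x_3,\; N x_2 + b x_3,\; N x_1 + c x_3\bigr)^{\mathsf T}. \]
The guiding observation is that the first coordinate of \emph{every} lattice vector equals $x_3$, so the constraint $M\mathbf{x}\in[0,N-1]^3$ already forces $x_3\in\{0,1,\dots,N-1\}$. Setting $n:=x_3$, this says there is nothing to choose in the first coordinate: it is exactly $n$.

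Next I would fix $n$ and argue that the remaining two coordinates are then completely determined. As $x_2$ ranges over $\ZZ$, the second coordinate $N x_2 + b n$ runs through the entire residue class $bn + N\ZZ$, and exactly one representative of this class lies in $[0,N-1]$, namely the least nonnegative residue $bn \bmod N$; an identical argument with $x_1$ handles the third coordinate, giving $cn \bmod N$. Hence for each $n\in\{0,\dots,N-1\}$ there is a unique $\mathbf{x}\in\ZZ^3$ with $x_3=n$ whose image lies in the cube, and that image is precisely $(n,\; bn \bmod N,\; cn \bmod N)$. This exhibits a bijection between $\{0,\dots,N-1\}$ and $M\ZZ^3\cap[0,N-1]^3$ whose image is, by definition, $\mlattice$ in the case $\mathbf{v}=(1,b,c)$. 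Distinctness of the $N$ points is automatic, since they already differ in their first coordinate, in agreement with $\gcd(N,1)=1$.

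I do not expect a genuine obstacle here: the statement is essentially a bookkeeping identity, and the whole content is the two facts that (i) the first coordinate pins down $x_3=n$ and (ii) each residue class $bn+N\ZZ$, $cn+N\ZZ$ meets $[0,N-1]$ in a single point. The only thing worth stating carefully is the uniqueness in (ii) together with the remark that no lattice point with $x_3\notin\{0,\dots,N-1\}$ can meet the cube, which together guarantee that the intersection contains neither too few nor too many points. As a consistency check I would also note $|\det M|=N^2$, so $\covol(M\ZZ^3)=N^2$, matching the density $N/N^3$ of the point set in the cube.
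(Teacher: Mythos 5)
Your proof is correct and takes essentially the same approach as the paper's: both parametrise the lattice points in the cube by the integer coordinate $x_3=n$ and use the fact that each residue class $bn+N\ZZ$ (resp.\ $cn+N\ZZ$) meets $\{0,1,\dots,N-1\}$ in exactly one point, namely the least nonnegative residue. If anything, your write-up is a touch more careful, since the paper asserts the reverse inclusion (``these are the only vectors that yield points in $[0,N-1]^3$'') without spelling out the first-coordinate argument you give.
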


\begin{proof}
Note that we can always find unique integers $x,y$ such that $ 0<Nx+nb < N $ and $ 0<Ny+nc < N $ for $1\leq n \leq N-1$. Every non-trivial point in $\mlattice$ can thus be obtained by multiplying the matrix with $(y,x,n)$. On the other hand it is easy to see that these are the only vectors in $\ZZ^3\setminus\{\mathbf{0}\}$ that yield points in $[0,N-1]^3$. Finally, multiplying with $(0,0,0)$ gives $(0,0,0)$.
\end{proof}

For a given point set $\mlattice$, write $\Lambda(\mlattice)$ for the underlying lattice.
Lemma \ref{lem:basis} can be used to conclude that the shortest distance in our point sets is given by the shortest vector of the underlying lattices. 

\begin{lemma}\label{lem:short}
In the above notation, we have that $ \lambda^{\ast}(\mlattice) = \lambda_1(\Lambda(\mlattice)). $
\end{lemma}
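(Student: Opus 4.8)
The statement is an equality of two quantities, so I would prove it as two inequalities. The inequality $\lambda^{\ast}(\mlattice) \ge \lambda_1(\Lambda(\mlattice))$ is immediate: by Lemma \ref{lem:basis} (after using the reduction \eqref{restriction} to assume $\mathbf{v}=(1,b,c)$) every point of $\mlattice$ lies in $\Lambda(\mlattice)$, so any difference $\mathbf{x}-\mathbf{y}$ of two distinct points of $\mlattice$ is a nonzero lattice vector and hence has length at least $\lambda_1(\Lambda(\mlattice))$. The whole content of the lemma is therefore the reverse inequality, and this is exactly where the warning in the text — that the shortest lattice vector need not itself be a difference of two set points — must be overcome: I have to exhibit a genuine pair $\mathbf{x},\mathbf{y}\in\mlattice$ whose difference realises a shortest lattice vector.

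The structural fact I would exploit is that $\Lambda=\Lambda(\mlattice)$ is $N$-periodic. From the generating matrix in \eqref{modLattice} one checks that $(N,0,0)$, $(0,N,0)$ and $(0,0,N)$ all lie in $\Lambda$, so $N\ZZ^3\subseteq\Lambda$; since $\covol(\Lambda)=N^2$, the index is $[\Lambda:N\ZZ^3]=N^3/N^2=N$, and Lemma \ref{lem:basis} identifies $\mlattice=\Lambda\cap[0,N-1]^3$ as a complete set of coset representatives for $\Lambda/N\ZZ^3$. Next I would bound the shortest vector: by the definition of the Hermite constant, $\lambda_1(\Lambda)\le\sqrt{\gamma_3}\,\covol(\Lambda)^{1/3}=2^{1/6}N^{2/3}<N$ for $N\ge 2$, so a shortest vector $\mathbf{w}=(w_1,w_2,w_3)$ satisfies $|w_i|\le N-1$ in every coordinate.

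I would then produce the pair by a counting argument. Translation by $\mathbf{w}$ permutes $\Lambda/N\ZZ^3$, so it induces a bijection $\sigma$ of the transversal $\mlattice$: for each $\mathbf{p}\in\mlattice$ let $\sigma(\mathbf{p})$ be the unique representative with $\sigma(\mathbf{p})\equiv\mathbf{p}+\mathbf{w}\pmod{N\ZZ^3}$, so that $\sigma(\mathbf{p})-\mathbf{p}=\mathbf{w}+N\mathbf{k}_{\mathbf{p}}$ for some $\mathbf{k}_{\mathbf{p}}\in\ZZ^3$. Because both entries lie in $[0,N-1]$ and $|w_i|\le N-1$, each coordinate $(\mathbf{k}_{\mathbf{p}})_i$ can only equal $0$ or $-\operatorname{sign}(w_i)$. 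Finding the required pair amounts to finding one $\mathbf{p}$ with $\mathbf{k}_{\mathbf{p}}=\mathbf{0}$, for then $\sigma(\mathbf{p})-\mathbf{p}=\mathbf{w}$ exhibits $\mathbf{w}$ as a difference of two set points and yields $\lambda^{\ast}(\mlattice)\le\|\mathbf{w}\|=\lambda_1(\Lambda)$. To locate such a $\mathbf{p}$ I would average over the transversal: since $\sigma$ is a bijection, $\sum_{\mathbf{p}\in\mlattice}\bigl(\sigma(\mathbf{p})_i-\mathbf{p}_i\bigr)=0$, which forces exactly $|w_i|$ of the $N$ representatives to have $(\mathbf{k}_{\mathbf{p}})_i\ne 0$. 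A union bound then leaves at least $N-(|w_1|+|w_2|+|w_3|)\ge N-\sqrt{3}\,\lambda_1(\Lambda)$ representatives with $\mathbf{k}_{\mathbf{p}}=\mathbf{0}$, and this count is positive once $N>(\sqrt{3}\,2^{1/6})^3=\sqrt{54}$, i.e. for $N\ge 8$.

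The step I expect to be the real obstacle is precisely this last one: passing from the ``up to $N$-periodicity'' bijection to an honest pair inside the cube, i.e. controlling how many representatives are forced to wrap around. The averaging identity together with the Hermite bound on $\lambda_1(\Lambda)$ is what makes the union bound win, but only for $N$ not too small; the finitely many remaining cases ($N\le 7$) I would dispose of by a direct check, or by replacing the crude union bound with an inclusion–exclusion count of the wrapping representatives in order to push the threshold down to all admissible $N$.
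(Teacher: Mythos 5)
Your proof is correct, and it takes a genuinely different route from the paper's. The paper argues geometrically: it picks a Minkowski-reduced basis $\{\mathbf{v}_1,\mathbf{v}_2,\mathbf{v}_3\}$ of $\Lambda(\mlattice)$, assumes for contradiction that no two points of $\mlattice$ differ by $\mathbf{v}_1$, deduces that all $N$ points would then lie on a single translate of the plane spanned by $\mathbf{v}_2,\mathbf{v}_3$, and refutes that with a ball/area packing estimate inside $[0,N-1]^3$; this needs no Hermite constant, but it rests on two only-sketched steps (the $\mathcal{O}(\sqrt{N})$ area bound and the closing ``(a)symmetry'' argument) and silently assumes $N$ large. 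You instead exploit the sublattice $N\ZZ^3\subseteq\Lambda(\mlattice)$ of index $N$: Lemma \ref{lem:basis} makes $\mlattice$ a transversal of $\Lambda(\mlattice)/N\ZZ^3$, translation by a shortest vector $\mathbf{w}$ permutes that transversal, the averaging identity forces exactly $|w_i|$ representatives to wrap around in coordinate $i$, and Hermite's bound $\lambda_1\le 2^{1/6}N^{2/3}<N$ makes the union bound $N-\sqrt{3}\,\lambda_1>0$ win for $N\ge 8$; no basis reduction is needed anywhere. Your route buys precision and portability: every step is an exact count, it shows in addition that a shortest vector is realised as a difference by all but $\mathcal{O}(N^{2/3})$ of the $N$ points, and it transfers essentially verbatim to $d=4,5$ (with threshold $N>(d\gamma_d)^{d/2}$), where Section \ref{sec:multiD} of the paper implicitly needs the analogue of this lemma but never reproves it. What the paper's route buys is geometric transparency and independence from the exact value of $\gamma_3$ (though any crude upper bound on $\gamma_3$ would serve your argument equally well). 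The one loose end on your side is the finite verification for $4\le N\le 7$ — roughly twenty triples after normalising $a=1$ — which you correctly flag and which is a routine computation; note that the paper's own proof carries the same small-$N$ caveat, just unacknowledged.
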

\begin{proof}
By Lemma \ref{lem:basis} every point in $\mlattice$ can be written as an \emph{integer} linear combination of the vectors of a \emph{reduced} basis $\{\mathbf{v}_1, \mathbf{v}_2, \mathbf{v}_3\}$ of the underlying lattice. We identify a point  $\sum_{i=1}^3 x_i \mathbf{v}_i$ in $\mlattice$ with its coordinates $(x_1,x_2,x_3)$ with respect to the reduced basis. If we fix the first coordinate, i.e. set $x_1=k$ for $k \in \ZZ$, we see that all such points $(k,x_2,x_3)$ lie on the hyperplane spanned by $\mathbf{v}_2$ and $\mathbf{v}_3$ and translated by the constant vector $k \cdot \mathbf{v}_1$.

We assume for contradiction that for every point $(x_1,x_2,x_3)$ in $\mlattice$ the corresponding pair of points $(x_1 \pm 1, x_2, x_3)$ lies outside of $[0,N-1]^3$. In other words, that there is no pair of points of $\mlattice$ whose difference is $\mathbf{v}_1$.
Since the cube $[0,N-1]^3$ (interpreted as a compact, solid set) has no holes, this assumption implies that for fixed $(x_1,x_2,x_3)$ also points $(x_1 \pm k, x_2, x_3)$ for $k \in \ZZ \setminus \{ -1,0,1\}$ lie outside of the cube. Now take an arbitrary point $(x_1,x_2,x_3)$ in $\mlattice$. Due to its lattice structure $\mlattice$ contains all integer points of the intersection of $[0,N-1]^3$ with the hyperplane spanned by $\mathbf{v}_2$ and $\mathbf{v}_3$ and translated by $x_1 \cdot \mathbf{v}_1$. However, for all such points and, again because the cube has no holes, $\mlattice$ does not contain any point on parallel hyperplanes of the form $(x_1 \pm k, x_2, x_3)$ for arbitrary but fixed $k \in \ZZ \setminus \{ 0\}$.
This implies that all points of $\mlattice$ lie on a hyperplane spanned by the two longer vectors of the reduced basis and potentially translated by a constant multiple of $\mathbf{v}_1$. Since $\mlattice$ always contains $(0,0,0)$ we see that we actually have $x_1=0$.

A simple area argument (similar to the one in the introduction) shows that if all $N$ points of $\mlattice$ lie on the intersection of the cube with a single hyperplane, then $\|\mathbf{v}_2\|$ is $\mathcal{O}(\sqrt{N})$. However, this contradicts the assumption that $(x_1 \pm 1, x_2, x_3)$ lie outside of the cube for every $(x_1,x_2,x_3)$ in $\mlattice$.

To see this, we first note that a ball of radius $ \|\mathbf{v}_2\|/2$ centered at the centroid of the intersection of the hyperplane with the cube has to contain at least one point $(x_1,x_2,x_3)$ of $\mlattice$ lying on the hyperplane. Next we center at this point a second ball of radius $\|\mathbf{v}_2\|$ which has to contain the points $(x_1 \pm 1, x_2, x_3)$, one on each side of the plane, because $\mathbf{v}_2$ is at least as long as $\mathbf{v}_1$. This second ball is contained in a third ball centred again at the centroid of the plane and of radius $3\|\mathbf{v}_2\|/2$. We call the two halves of the ball on opposite sides of the plane \emph{hemispheres}.
Assuming that the centroid of the plane is at the center of the cube shows that the distance to all faces of the cube is at least $(N-1)/2$ and hence the third ball is fully contained in the cube (for $N$ large enough) which contradicts our assumption.
In case that the centroid is not at the center of the cube, it can be shown by a (a)symmetry argument that at least one of the hemispheres has to be fully contained in the cube, giving the same conclusion.
\end{proof}

\section{Main results}
\label{sec:main}
\subsection{Maximal shortest distance in three-dimensions}
We use the connection to lattices and their shortest vectors to refine the upper bound on the longest shortest distance. 
We start with an intuitive argument. It is well known and was first shown by Gauss in 1831 that the face-centered cubic (FCC) lattice
\begin{equation} \label{fcc}  \Omega:= \begin{pmatrix} 1 & 0 & 1 \\ 1 & 1 & 0 \\ 0& 1 &1 \end{pmatrix} \ZZ^3 \end{equation}
gives the densest (sphere) lattice packing in $\RR^3$. In fact, the FCC lattice gives the densest packing among all lattices according to the famous \emph{Kepler Conjecture} which was finally settled by Hales and Ferguson \cite{hales, lagarias} utilising an approach of Fejes T\'{o}th \cite{FT} as well as state-of-the-art formal proof techniques \cite{hales2}; see also \cite{szpiro}. Importantly, the shortest vector in the FCC lattice is exactly twice the radius of the spheres which are centered at the nodes of the lattice. We can refine the volume argument of the previous section to get an accurate value for the constant in the upper bound.

We recall that the the FCC lattice can be built out of the basic cube in Figure \ref{fig:basicCube}. 
Ignoring boundary effects, each of the $N$ points is contained in $8$ basic cubes and each basic cube contains $4$ lattice points. Thus, we can build $2N$ basic cubes with edge length $\ell$. 
We are looking for the maximal edge length $\ell$ such that all $2N$ basic cubes fit into $[0,N-1]^3$:
$$ \max_{\ell \in \RR} \ 2N \ell^3 \leq (N-1)^3 \ \ \ \ \Rightarrow \ \ \ \ \ell \approx \frac{N^{2/3}}{\sqrt[3]{2}} . $$
The shortest distances between the nodes of this lattice are the face diagonals. Thus, we get an accurate idea of the constant $C$ in \eqref{heuristic1}:
$$ C \approx \frac{1}{\sqrt[3]{2}} \sqrt{2} = 2^{1/6} \approx 1.122.$$

This argument can be made exact using the notion of the \emph{Hermite constant} of a lattice as well as Lemma \ref{lem:basis} and \ref{lem:short}. From Lemma \ref{lem:basis} we deduce that the covolume of our particular lattices is $N^2$. Hence,
$$ \frac{(\lambda^{\ast}(\mlattice))^2}{(N^2)^{2/3}} \leq \gamma_3 = 2^{1/3}  \ \ \Rightarrow \ \  \lambda^{\ast}(\mlattice) \leq 2^{1/6} N^{2/3}.$$

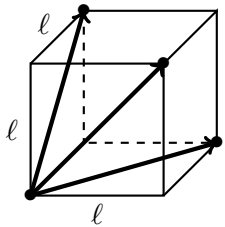
\begin{figure}[h!]
\begin{center}
\begin{tikzpicture}[scale=0.35]

\node at (0,0) {$\bullet$};
\node at (5,5) {$\bullet$};
\node at (7,2) {$\bullet$};
\node at (2,7) {$\bullet$};

\draw[thick] (0,0) -- (5,0) -- (5,5)--(0,5) --(0,0);
\draw[thick] (5,0) -- (7,2) -- (7,7) -- (5,5);
\draw[thick] (0,5) -- (2,7)--(7,7);
\draw[thick,dashed] (0,0) -- (2,2)--(7,2);
\draw[thick,dashed] (2,7)--(2,2);

\draw[ultra thick,->] (0,0)--(2,7);
\draw[ultra thick,->] (0,0)--(7,2);
\draw[ultra thick,->] (0,0)--(5,5);

\node at (2.5,-0.7) {$\ell$};
\node at (-0.7,2.5) {$\ell$};
\node at (0.5,6.5) {$\ell$};


\end{tikzpicture}
\end{center}
\caption{Basic building block of FCC lattice.} \label{fig:basicCube}
\end{figure}

\subsection{The two-dimensional case}
In \cite{paus2} the two-dimensional variant of the problem was studied. It is well known that the hexagonal lattice
\begin{equation} \label{hexLattice} \Lambda_h = \begin{pmatrix} 1 & 1/2 \\ 0 & \frac{\sqrt{3}}{2} \end{pmatrix} \ZZ^2  
\end{equation}
maximises the packing density in $\RR^2$; see \cite{fuk11,rogers58,rogers}. We define 
$$\mlatticee:=\{ (na \! \! \! \! \pmod{N}, nb \! \! \! \! \pmod{N}) : 0 \leq n < N\}$$ 
and $\lambda^{\ast}(\mlatticee)$ as in the three dimensional case.
It turns out that the shortest distance in any two-dimensional set $\mlatticee$ satisfies
$$\sqrt{2} \leq \lambda^{\ast}(\mlatticee) \leq \sqrt{N} \cdot \sqrt{\gamma_2}.$$ 
Set $n=4s+3$ and define
$$ \frac{b_s}{N_s}=[0,b_1,b_2, \ldots, b_n]=[0,2,1,2,1,\ldots, 1,2] $$
via this particular continued fraction expansion.
Note that this continued fraction converges to $(\sqrt{3}-1)/2$.
It was then shown that the particular family of integer lattices
$$ X_s=  \begin{pmatrix} 0 & 1 \\ N_s & b_s \end{pmatrix} \ZZ^2$$
converges to (a rotated and scaled version of) the hexagonal lattice as the parameter $s$ approaches infinity and the normalised length, $\lambda^{\ast}(\Pi_{N_s,(1,b_s)})/\sqrt{N}$, of the shortest distance converges to $\sqrt{\gamma_2}$.

\subsection{Main result and numerical observations}
Having the upper bound and knowing that the problem can be fully resolved in two dimensions motivates to look for optimal lattices in three dimensions.
As a main result we show how to construct lattices, $\Lambda(\mlattice)$, whose normalised shortest distance is arbitrarily close to $2^{1/6}$. 

\begin{theorem} \label{thm:main}
For every $\varepsilon >0$ there exist infinitely many pairs $(N,\mathbf{v})$ such that
$$\frac{\lambda^{\ast}(\mlattice)}{N^{2/3}} > 2^{1/6} - \varepsilon .$$
\end{theorem}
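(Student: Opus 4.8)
The plan is to realise the extremal configuration, namely the FCC lattice $\Omega$ of \eqref{fcc}, as a limit of rescaled lattices of our special form, and then to read off the theorem from the continuity of the shortest-vector functional. By Lemma \ref{lem:basis} the lattice $\Lambda(\mlattice)$ has covolume $N^2$, and by Lemma \ref{lem:short} we have $\lambda^{\ast}(\mlattice)=\lambda_1(\Lambda(\mlattice))$. Since the quantity to be bounded is scale invariant, $\lambda^{\ast}(\mlattice)/N^{2/3}=\lambda_1\bigl(N^{-2/3}\Lambda(\mlattice)\bigr)$, where the rescaled lattice has covolume $1$. The covolume-$1$ FCC lattice attains $\lambda_1=\sqrt{\gamma_3}=2^{1/6}$, and the map sending a covolume-$1$ lattice to the length of its shortest vector is continuous (for the natural topology on the space of covolume-$1$ lattices), with $2^{1/6}$ as its maximum. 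Hence it suffices to produce a sequence $(N_k,\mathbf{v}_k)$, with $N_k\to\infty$, for which the rescaled lattices $N_k^{-2/3}\Lambda(\Pi_{N_k,\mathbf{v}_k})$ converge to a rotated copy of the covolume-$1$ FCC lattice: for such a sequence $\lambda_1\to 2^{1/6}$ from below, so all but finitely many terms satisfy the claimed inequality, which yields infinitely many admissible pairs.

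Next I would pin down the target geometry. Writing $\mathbf{v}=(1,b,c)$, the generators $(0,N,0)$ and $(0,0,N)$ appearing in \eqref{modLattice} show that $\Lambda(\mlattice)$ always contains the square sublattice $\{0\}\times N\ZZ^2$ in the plane $x=0$, while $(1,b,c)$ lifts the first coordinate. Thus the family at my disposal is exactly the rank-$1$ lattices $N\ZZ^3\subseteq\Lambda\subseteq\ZZ^3$ generated by $(1,b,c)$ modulo $N\ZZ^3$, and a short vector has the form $(n,\,bn-Np,\,cn-Nq)$ with $p,q$ chosen to minimise the last two coordinates; the shortest vectors occur for $n\sim N^{2/3}$. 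The construction is then to fix a rotation placing a covolume-$1$ FCC lattice so that a coordinate plane meets it in a square sublattice and a minimal vector has small first coordinate, to scale this configuration to covolume $N^2$, and to choose $b,c$ (for suitable $N$) so that the three generators of $\Lambda$ approximate the chosen FCC basis to within an additive error that is $o(N^{2/3})$. Rounding the scaled FCC basis to the integer grid produces such $b,c$; the relative error is $O(N^{-2/3})\to 0$, so the rescaled lattices converge to the FCC limit and, in particular, the vectors realising $\lambda_1$ are the intended near-minimal FCC vectors once $k$ is large.

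The hard part is reconciling the rigid integrality with this geometry: the condition $N\ZZ^3\subseteq\Lambda$ forces $Ne_1,Ne_2,Ne_3$ to be genuine lattice vectors, which over-determines the rounding and cannot be arranged for every $N$. I expect to resolve this along a carefully chosen subsequence of moduli $N_k$, selecting the rotation of FCC and the values $N_k$ so that a coarse square cross-section of covolume $N_k^2$ exists exactly inside the scaled FCC lattice (a condition of number-theoretic type, requiring the relevant index $2^{-1/3}N_k^{2/3}$ to be an integer that is a sum of two squares of the correct parity), and then reading off $b_k,c_k$ from the integral structure so that $(1,b_k,c_k)$ is the distinguished lift. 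Verifying (i) that such $N_k$ form an infinite set, (ii) that the rescaled $\Lambda(\Pi_{N_k,\mathbf{v}_k})$ genuinely converges to the covolume-$1$ FCC lattice, and (iii) that no unexpected shorter vector appears, so that $\lambda_1$ is computed by the FCC minimal vectors rather than by some vector with large $n$, is the technical core; once it is in place the continuity argument of the first paragraph completes the proof. An explicit such family, analogous to the continued-fraction family used in the two-dimensional case, would at the same time yield the sharper convergence statement announced as Theorem \ref{thm2}.
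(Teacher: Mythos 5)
Your first paragraph is a correct reduction, but it carries no existential content. By Lemma \ref{lem:basis} and Lemma \ref{lem:short}, the quantity $\lambda^{\ast}(\mlattice)/N^{2/3}$ is indeed the first minimum of the rescaled covolume-one lattice, $2^{1/6}=\sqrt{\gamma_3}$ is its maximum value, attained by the FCC lattice, and continuity of $\lambda_1$ on the space of covolume-one lattices would transfer convergence of lattices into convergence of normalised shortest distances. However, since FCC is (up to isometry) the unique maximiser, producing a sequence of admissible lattices that converges to a rotated FCC is essentially \emph{equivalent} to the statement being proved, not a stepping stone towards it. So after your first paragraph the whole theorem is still open.

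The construction in your second and third paragraphs is where the argument breaks off rather than concludes. You correctly identify the obstruction: a lattice of the form $\Lambda(\mlattice)$ must lie inside $\ZZ^3$, contain $N\ZZ^3$, meet the plane $x=0$ exactly in $\{0\}\times N\ZZ^2$, and have cyclic quotient generated by $(1,b,c)$; naive rounding of a rotated, scaled FCC basis destroys all of these properties, so one cannot simply ``read off'' $b,c$. Your proposed repair --- pass to a subsequence $N_k$ satisfying a sum-of-two-squares-type condition so that an exact square cross-section of a scaled, rotated FCC exists inside $\ZZ^3$ --- is left entirely unverified: you do not show that infinitely many such $N_k$ exist, do not produce $b_k,c_k$, do not control the approximation error, and, most importantly, do not rule out an unexpected shorter vector (your own item (iii)). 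That last point is the genuinely hard step, and it is precisely what the paper's proof supplies by explicit computation: it takes $b=2k+1$, $N=(b^3-1)/2$, $\mathbf{v}=(1,b,b^2)$ (Theorem \ref{thm2}), exhibits three explicit vectors, shows via a unimodular matrix that they form a basis of $\Lambda(\mlattice)$, verifies the Minkowski reduction inequalities of Definition \ref{reduced} by elementary estimates, and concludes $\lambda^{\ast}(\mlattice)=\sqrt{2k^2(3+6k+4k^2)}$, whose normalisation tends to $2^{1/6}$. Without an analogue of that explicit family and verification, your proposal remains a strategy outline, not a proof.
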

See Theorem \ref{thm2} in Section \ref{sec:proofs} for an explicit construction of such lattices and Theorem \ref{thm3} in Section \ref{sec:multiD} for a natural generalisation to $d=4$ and $d=5$. Our construction is based on extensive numerical analysis of all lattices, $\Lambda(\mlattice)$, up to $N=310$; see Figure \ref{fig:allN} (left). From these observations it appears at first as if it is not possible to obtain lattices with the longest possible shortest distances. The best lattice we can find up to $N=310$ is for the parameters $N=244$ with $\mathbf{v}=(1,13,169)$. We will study this remarkable lattice which led to our explicit construction in the next section.

\begin{figure}
\includegraphics[scale=0.5]{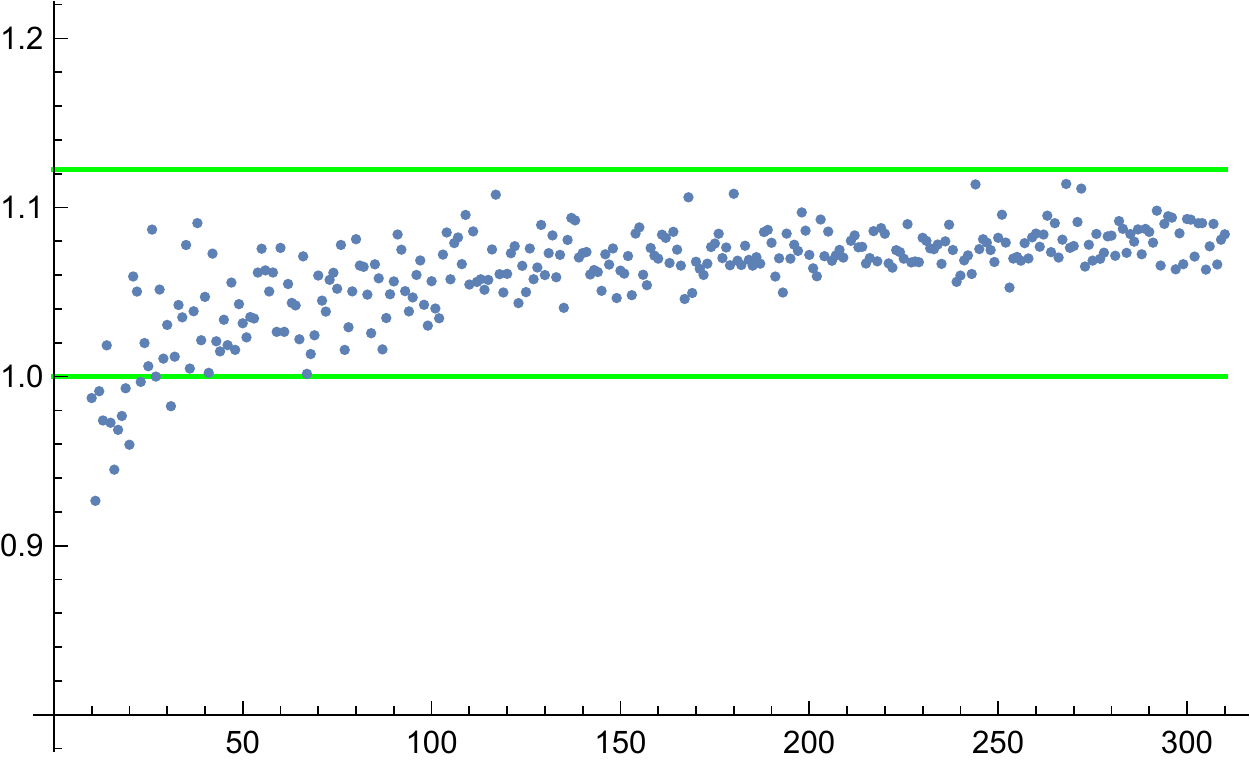} \ \ \ \ \
\includegraphics[scale=0.5]{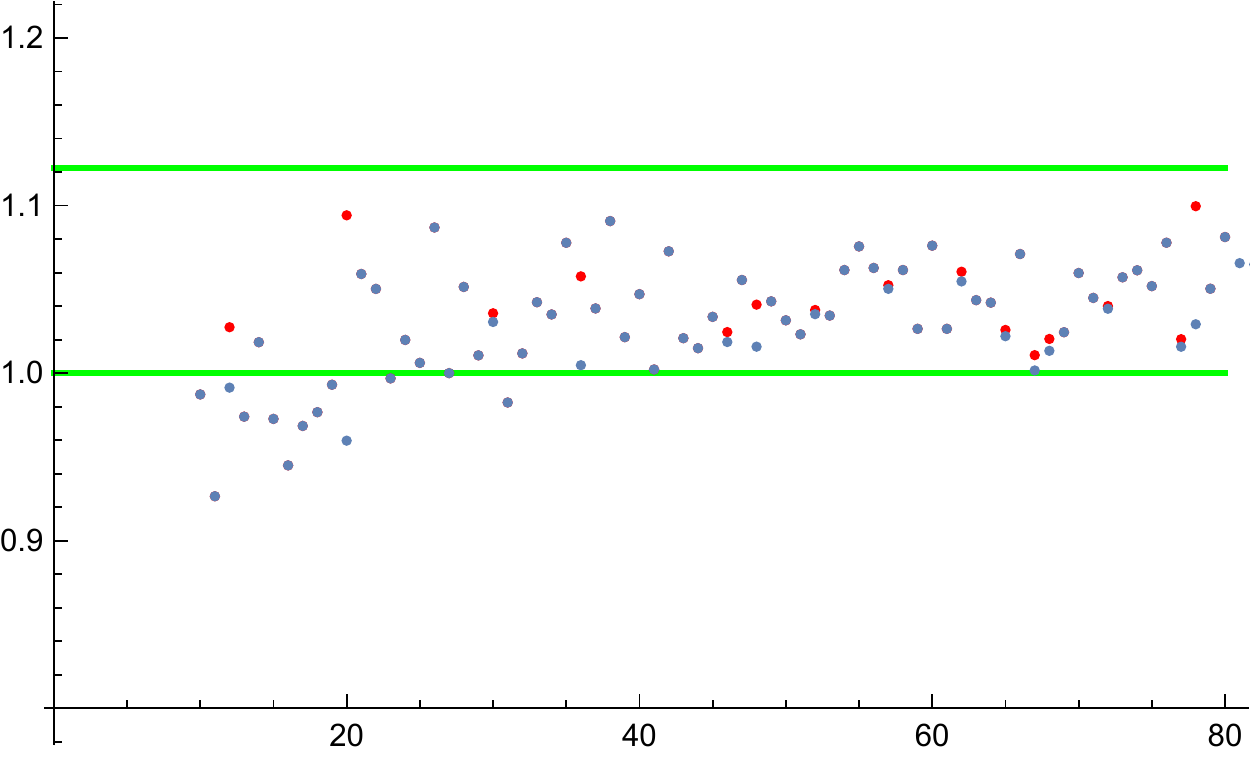}
\caption{\emph{Left:} The normalised length of the longest shortest vector in base $N$ among all lattices with generating vectors of the form $(1,b,c)$ with $1< b<c <N$. The green line at $2^{1/6}\approx 1.122$ illustrates the upper bound, while the second green line is at 1 and illustrates what can be achieved with cubic lattices. \emph{Right: } The red dots show improvements when allowing all generating vectors $(a,b,c)$ with $\gcd(N,a,b,c)=1$.} \label{fig:allN}
\end{figure}

Furthermore, we present a similar construction in Theorem \ref{thm1} of lattices such that the normalised shortest distance converges to 1. While the first family of lattices can be interpreted as an approximation of the FCC lattice, the second family approximates a cubic lattice.
To see this, set $N=n^3$ and look at the set
\begin{equation*} G_n= \begin{pmatrix} n^2 & 0 & 0 \\ 0 & n^2 & 0 \\ 0 & 0 & n^2 \end{pmatrix} \ZZ^3 \cap [0,N-1]^3. 
\end{equation*}
Then 
$$\frac{\lambda^{\ast}(G_n)}{N^{2/3}} = \frac{n^2}{n^2} = 1.$$
We close this section with an open problem for which we expect an affirmative answer.
\begin{problem}
Prove (or disprove) that there exists an absolute $\varepsilon >0$ such that for every $N>N_0$ there is a generating vector $\mathbf{v}=(1,b,c)$ with $1<b<c <N$ and
$$ \frac{\lambda^{\ast}(\mlattice)}{N^{2/3}} > 1+\varepsilon. $$
\end{problem}

\begin{remark}
If we relax the condition on the generating vector to $\gcd(N,a,b,c)=1$, then we can also get very good lattices for small $N$ as the examples $N=20$, $\mathbf{v}=(6,15,18)$ for which we get $\lambda^{\ast}(\mlattice)/N^{2/3} = 1.0942\ldots$ or $N=78$, $\mathbf{v}=(15,65,75)$ with $\lambda^{\ast}(\mlattice)/N^{2/3} = 1.09965\ldots$ show.
However, in general it seems that there is no systematic improvement as shown in Figure \ref{fig:allN} (right) at least not for $10\leq N \leq 80$.
\end{remark}

\section{A remarkable lattice}
\label{sec:lattice}

The lattice for $N=244$ and $\mathbf{v}=(1,13,169)$ has several remarkable properties which helped to find the family of lattices presented in Theorems \ref{thm1}-\ref{thm3} and which gives an idea why lattices with long shortest vectors seem so rare and hard to find.
First, we notice that $b^2=c$, i.e. $13^2=169$, and that $b\cdot c = b^3 = 2197 \equiv 1 \pmod{244}$. This property has important implications. If we look at the three two-dimensional orthogonal projections of the lattice, i.e. the lattices generated by
$$ 
\begin{pmatrix}  0 & 1 \\ 244 & 13 \end{pmatrix},
\begin{pmatrix}  0 & 1 \\ 244 &169 \end{pmatrix},
\begin{pmatrix} 0 & 13 \\ 244 & 169  \end{pmatrix},
$$
we see that they are all copies of the same two-dimensional lattice modulo $244$; see Figure \ref{fig:projections}.

\begin{figure}
\includegraphics[scale=0.35]{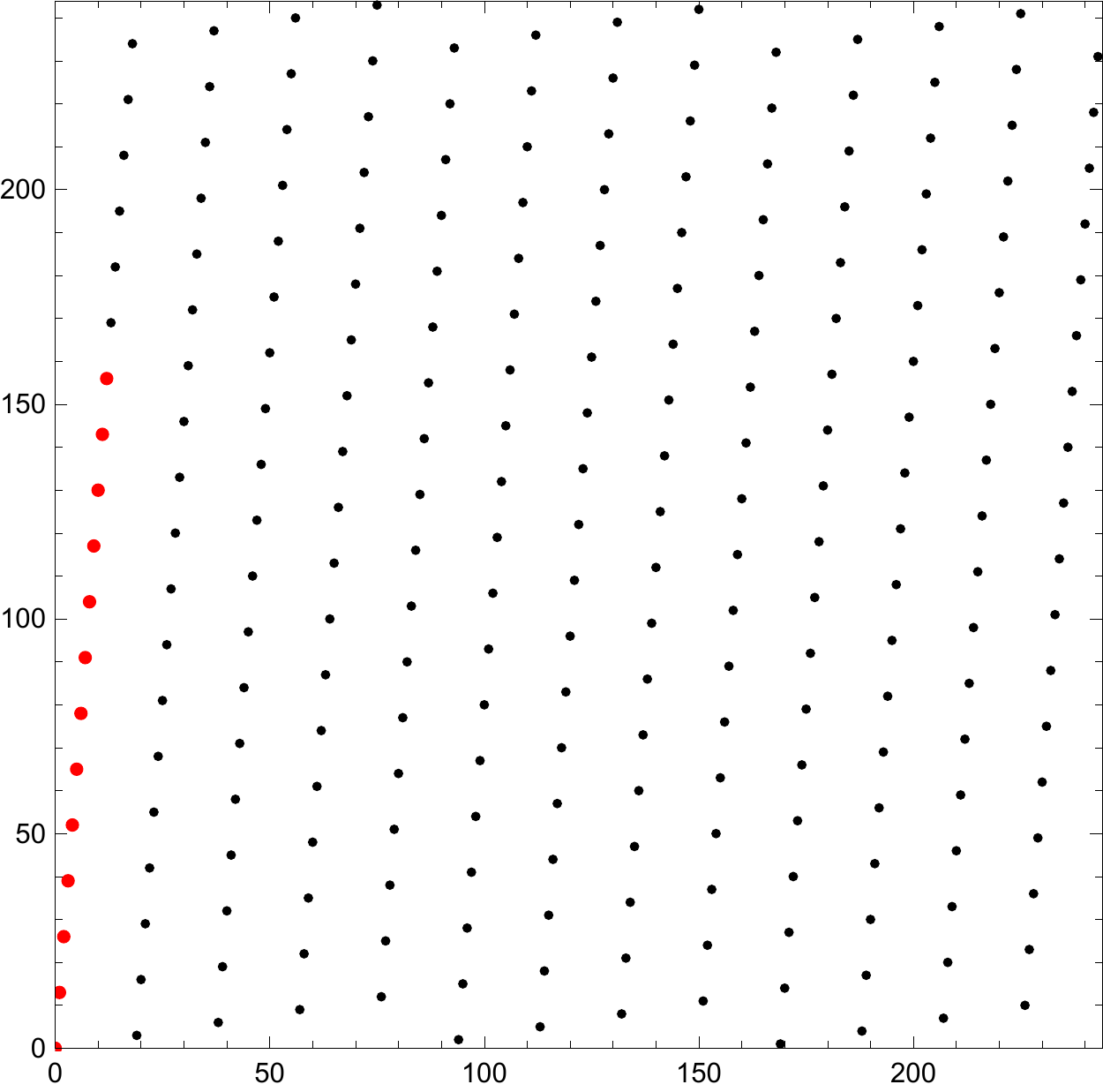} \quad \quad
\includegraphics[scale=0.35]{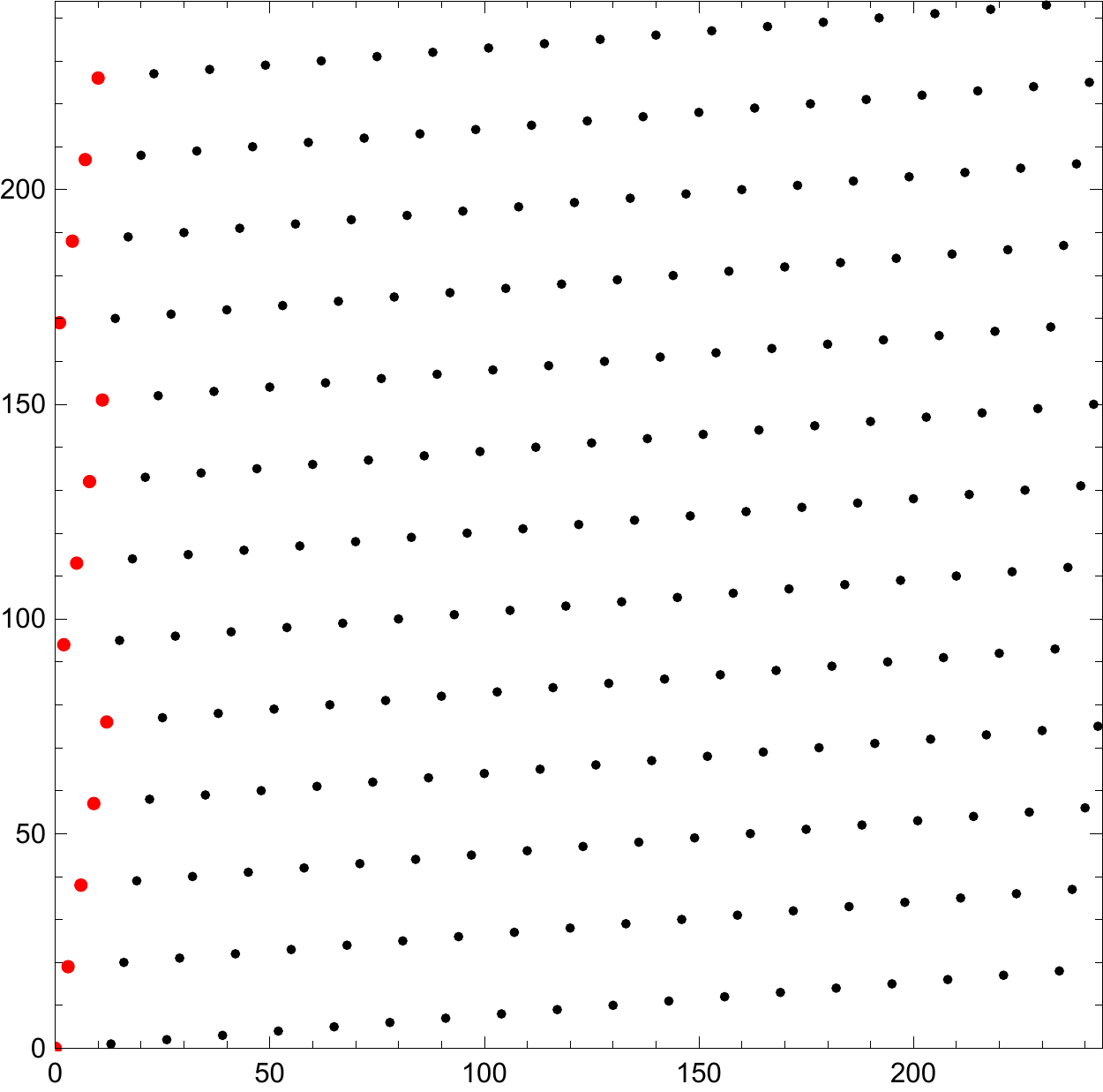} \quad \quad
\includegraphics[scale=0.35]{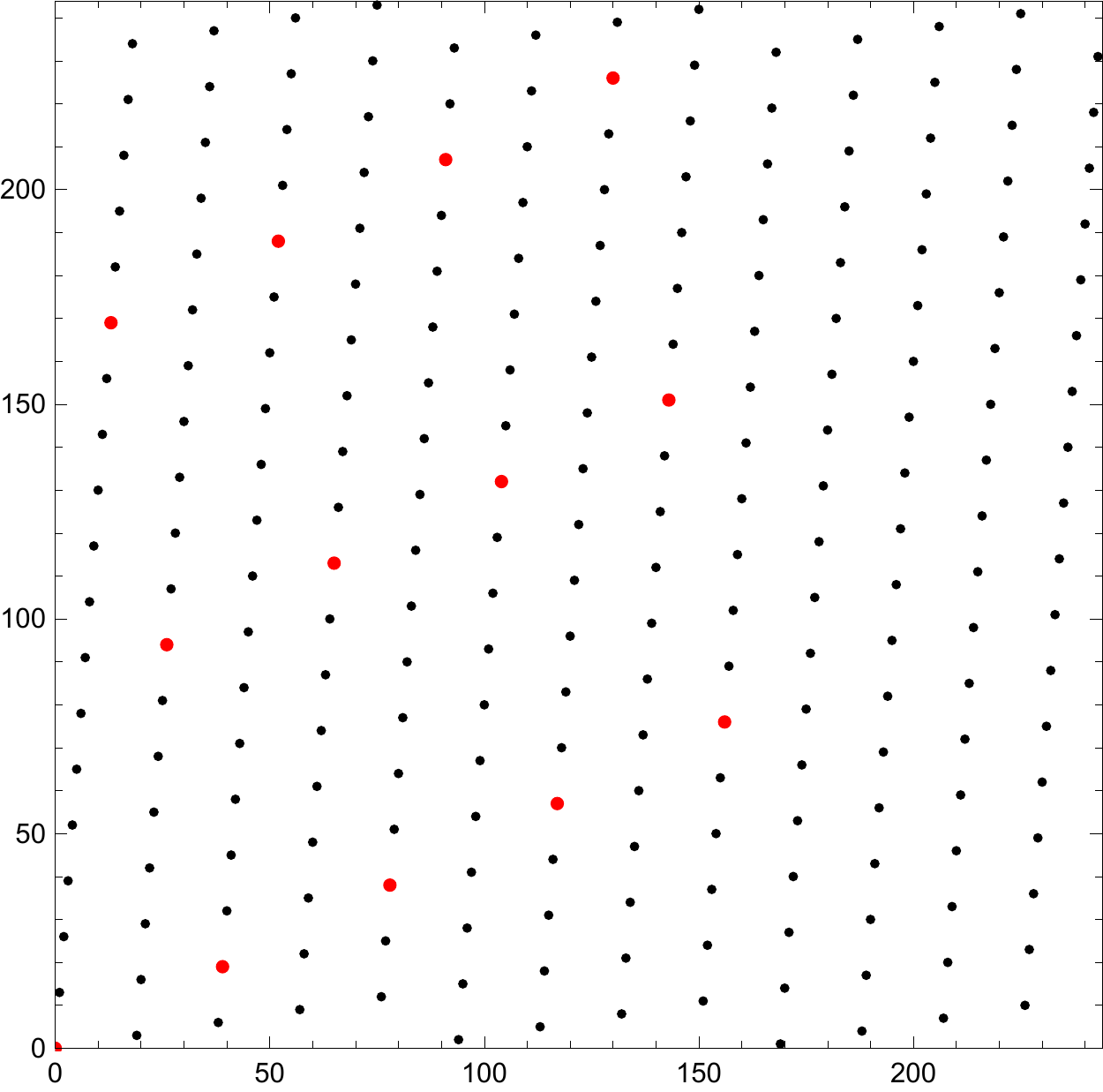} 
\caption{The three projections of the lattice $\Pi_{244,\mathbf{v}}$ with $\mathbf{v}=(1,13,169)$. The first $13$ points of each lattice are in red.} \label{fig:projections}
\end{figure}

The second lattice can be obtained from the first via a rotation by $\pi/2$ and a reflection on the $x$-axis:
$$ (1,b) \cdot  \begin{pmatrix}  0 & -1 \\ 1 & 0 \end{pmatrix} \cdot \begin{pmatrix}  1 & 0 \\ 0 & -1 \end{pmatrix} = (b,1) = (b\cdot 1, b \cdot c).$$
Thus, there is a bijection between the two lattices; the point $(n,bn)$ in the first lattice is obtained as point $(k,kc)$ for $k=bn$ in the second lattice.
Similarly, we see that there is also a bijection between the first and third lattice using both conditions on the parameters; i.e. $bc \equiv 1 \pmod {244}$ and $b^2 \equiv c \pmod {244}$. We have that
$$(1,b) \equiv (b^2 \cdot b, b^2 \cdot c) \pmod{244},$$
and hence the point $(n,bn)$ in the first lattice is obtained as point $(kb,kc)$ in the third lattice for $k=b^2 n$.
An easy calculation shows that the first of the three two-dimensional lattices has a reduced basis of the form $(1,13), (19,3)$; this corresponds to the points with indices $n=1$ and $n=19$.
Using the bijections of the indices of the points that we have just established, we can translate these two vectors and see that we obtain these vectors for indices $k=b=13$ and $k=19b=3$ in the second lattice as well as $k'=b^2=c=169$ and $k'=19b^2=-39$ in the third lattice. Interestingly, the three three-dimensional vectors for $n=1$, $k=b$, $k'=c$ give three variations of the vector $(1,b,c)$ thanks to our assumptions, whereas the vectors for $n=19$, $k=3$ and $k'=-39$ are
\begin{equation} \mathbf{v}_1=\begin{pmatrix} 19 \\ 3 \\ 39 \end{pmatrix}, 
\mathbf{v}_2=\begin{pmatrix} 3 \\ 39 \\ 19 \end{pmatrix}, 
\mathbf{v}_3=\begin{pmatrix} -39 \\ -19 \\ -3 \end{pmatrix} 
\end{equation}
and as such a reduced basis of the lattice.
Hence, we obtain
\begin{equation}\label{equ1} \frac{\lambda(\Pi_{244,(1,13,169)})}{244^{2/3}} = \frac{\sqrt{1891}}{244^{2/3}} = 1.11366\ldots \end{equation}
This lattice is a rhombohedral lattice which is an almost perfect approximation of the FCC lattice.
The angle $\alpha$ between the edges of the rhomboid is given as
$$ \alpha = \arccos \left( \frac{\mathbf{v}_1 \cdot \mathbf{v}_2}{\| \mathbf{v}_1\| \|\mathbf{v}_2 \|} \right) 
= \arccos \left( \frac{\mathbf{v}_1 \cdot \mathbf{v}_3}{\| \mathbf{v}_1\| \|\mathbf{v}_3 \|} \right)
=\arccos \left( \frac{\mathbf{v}_2 \cdot \mathbf{v}_3}{\| \mathbf{v}_2\| \|\mathbf{v}_3 \|} \right) = 1.06572 \ldots$$
which is almost the angle $\pi/3 = 1.0472\ldots$ obtained for the FCC lattice via

$$\arccos \left( \frac{(1,1,0) \cdot (1,0,1)}{\| (1,1,0)\| \|(1,0,1) \|} \right) = \pi/3 .$$

\subsection{A general procedure?}
The calculations in the previous section suggest the following general procedure:
Assume $N,b,c$ are such that $bc\equiv 1 \pmod{N}$ and $b^2 \equiv c \pmod{N}$ (or $bc\equiv -1 \pmod{N}$ and $b \equiv c^2 \pmod{N}$). 
Then it is easy to see, using the same index bijections as before, that the three two-dimensional projections of the lattice are indeed always variants of the same lattice.
And it is tempting to hope that the resulting lattice is again rhombohedral such that 
the vectors of the reduced basis are permutations of $$(\pm (x \pmod N), \pm (xb \pmod N), \pm (xb^2 \pmod N))$$ where
representatives of each class are taken from the interval $[-(N-1)/2, (N-1)/2]$ as in the example
for $N=244$ and $x$ being the non-trivial index of the shortest vector in the two-dimensional lattice. The length of the shortest vector should then be given as $\|(x,x b,x b^2)\|$.
It turns out, that the shortest vector is in general indeed of this form. Unfortunately, it also seems that in general only two of the three vectors of the reduced basis are of this form, while the third vector is (very) different. Thus, in general we do not obtain a rhombohedral lattice approximating the FCC lattice. Even worse, systematic experiments show that the third basis vector has a much larger norm in general, generating a lattice that is far from being rhombohedral.

As an example we look at $N=366$ and $\mathbf{v}=(1,13,169)$. We have that $b\cdot c = b^3 = 2197 \equiv 1 \pmod{366}$ and $b^2=c$. The three two-dimensional projections are variants of the lattice spanned by $(0,366)$ and $(1,13)$ with reduced basis $(1,13)$ and $(-28,2)$. Hence, in the above notation $x=-28$, $xb=-28 \cdot 13 \equiv 2 \pmod{366}$ and $xb^2 = -28 \cdot 169 \equiv 26 \pmod{366}$. 
However, we observe that 
\begin{equation} x+ xb + xb^2 = x(1 + b + b^2) \equiv 0 \pmod{366}\end{equation} 
-- a property which makes it impossible to get a reduced basis as in the case of $N=244$. This seems to be the generic case for examples of this kind. Indeed, the reduced basis of the three dimensional lattice $\Pi_{366, \mathbf{v}}$ is
\begin{equation} \mathbf{v}_1=\begin{pmatrix} -28 \\ 2 \\ 26 \end{pmatrix}, 
\mathbf{v}_2=\begin{pmatrix} -2 \\ -26 \\ 28 \end{pmatrix}, 
\mathbf{v}_3=\begin{pmatrix} 61 \\ 61 \\ 61 \end{pmatrix}.
\end{equation}
Hence, 
$$ \frac{\lambda(\Pi_{366, \mathbf{v}})}{366^{2/3}} = \frac{2\sqrt{366}}{366^{2/3}} = 0.7477\ldots, $$
and similarly for other lattices of this form and generated by $(1,13,169)$; see Table \ref{table1}.

\begin{table}[h]
\begin{center}
\begin{tabular}{|c|c|c|c|c|}
\hline
$N$ & $x$ & $bx$ & $bx^2$ &  $\lambda^{\ast}(\Pi_{N,\mathbf{v}})/N^{2/3}$\\
\hline
61 & -4 & 9 & -5& 0.7127\\
122 & 10& 8 & -18& 0.7830\\
183 & -14& 1& 13 & 0.5935\\
\hline
244 & 19 &3&39& 1.11366 \\
366 &-28&2&26& 0.7477\\
549 & -42 &3&39& 0.8560\\
\hline
732 &-56&4&52& 0.9421 \\
1098 &-84&6&78& 1.0785\\
$2196$ &-168&12&156& $1.0032^\ast$\\
\hline
\end{tabular}
\end{center}
\caption{Lattices satisfying the conditions $bc\equiv 1 \pmod{N}$ and $b^2 \equiv c \pmod{N}$ for $\mathbf{v}=(1,13,169)$. $^\ast$The shortest vector for $N=2196$ is given by $(1,13,169)$; see Theorem \ref{thm1}.}
\label{table1}
\end{table}

\section{Two infinite sets of lattices}
\label{sec:proofs}
The results in Table \ref{table1} and for similar sets of examples suggest that while the case of $N=244$ seems highly exceptional and mysterious there may be two general patterns for bases of the form $N=b^3-1$ and $N=(b^3-1)/2$ for integers $b>0$. We explore these examples in the following and note that Theorem \ref{thm2} implies Theorem \ref{thm:main}.
\begin{theorem} \label{thm1}
For an integer $m>2$ let $N=m^3-1$ and $\mathbf{v}=(1,m,m^2)$. Then 
$$\lambda^{\ast}(\Pi_{N,\mathbf{v}}) = \sqrt{1+m^2+m^4}$$
such that
$$ \lim_{m\rightarrow \infty} \frac{\lambda^{\ast}(\Pi_{N,\mathbf{v}})}{N^{2/3}} = 1. $$
\end{theorem}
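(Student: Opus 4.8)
The plan is to exploit the multiplicative structure coming from the identity $m^3 = N+1 \equiv 1 \pmod N$, which makes the defining congruences cyclic, and then to pass to an explicit circulant basis in which the shortest-vector problem splits into a transparent minimisation.

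First I would use Lemma \ref{lem:basis} to describe $\Lambda(\Pi_{N,\mathbf{v}})$ as the set of integer vectors $(k,y,z)$ with $y \equiv mk$ and $z \equiv m^2 k \pmod N$. Since $mz \equiv m^3 k \equiv k \pmod N$, these constraints are invariant under a cyclic shift of the coordinates: there are integers $u,s,t$ with
\[
mk - y = Nu, \qquad my - z = Ns, \qquad mz - k = Nt .
\]
Solving this linear system for $(k,y,z)$ and repeatedly reducing via $m^3 = N+1$ yields the clean parametrisation
\[
\begin{pmatrix} k \\ y \\ z \end{pmatrix} = C \begin{pmatrix} u \\ s \\ t \end{pmatrix}, \qquad
C = \begin{pmatrix} m^2 & m & 1 \\ 1 & m^2 & m \\ m & 1 & m^2 \end{pmatrix}.
\]
Each column of $C$ is readily checked to lie in the lattice (e.g.\ the first is $(m^2,1,m)$, the cyclic image of $\mathbf{v}$), so $\mathbf{n}=(u,s,t)\mapsto C\mathbf{n}$ is a bijection between $\ZZ^3$ and $\Lambda(\Pi_{N,\mathbf{v}})$; in particular the columns form a basis, consistent with $\det C = m^6 - 2m^3 + 1 = (m^3-1)^2 = N^2 = \covol(\Lambda(\Pi_{N,\mathbf{v}}))$.

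Next I would reduce the shortest-vector computation to a two-parameter minimisation. A direct computation of the Gram matrix gives $C^{\top}C = (m^4+m^2+1)\,I + (m^3+m^2+m)(J-I)$, where $J$ is the all-ones matrix, so writing $q=m^2+m+1$, $P=\|\mathbf{n}\|^2$ and $\Sigma=(u+s+t)^2$, and using the factorisations $m^4-m^3-m+1=(m-1)^2 q$ and $m^3+m^2+m=mq$, one obtains
\[
\|C\mathbf{n}\|^2 = q\big[(m-1)^2 P + m\,\Sigma\big].
\]
It then remains to minimise $(m-1)^2 P + m\Sigma$ over $\ZZ^3\setminus\{\mathbf{0}\}$. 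If $\sum_i n_i \neq 0$ then $P\ge 1$ and $\Sigma\ge 1$, giving at least $(m-1)^2+m = m^2-m+1$, attained at $\mathbf{n}=\pm e_i$; if $\sum_i n_i = 0$ then $P\ge 2$, giving at least $2(m-1)^2$, which exceeds $m^2-m+1$ for $m\ge 3$. Hence the minimum is $m^2-m+1$, so $\lambda^{\ast}(\Pi_{N,\mathbf{v}})^2 = q(m^2-m+1) = (m^2+m+1)(m^2-m+1) = 1+m^2+m^4$, with shortest vector $(m^2,1,m)$, a cyclic image of $(1,m,m^2)$.

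The limit is then immediate, since $\lambda^{\ast}/N^{2/3} = \sqrt{1+m^2+m^4}\,/\,(m^3-1)^{2/3}$ and both numerator and denominator are asymptotic to $m^2$. I expect the main obstacle to be one of discovery rather than difficulty: recognising that the cyclic relation $m^3\equiv 1$ forces the circulant basis $C$, after which the factorisation $(m-1)(m^2+m+1)=N$ makes the quadratic form split into the shape above. The only steps needing care are verifying $\det C = N^2$, so that $C$ is genuinely a basis and the minimisation ranges over all of $\ZZ^3$ rather than a sublattice, and the elementary inequality $2(m-1)^2 > m^2-m+1$ (valid exactly for $m\ge 3$, matching the hypothesis $m>2$) that rules out the balanced candidates.
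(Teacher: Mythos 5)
Your proof is correct, and although the basis you construct is, up to sign and ordering, the same one the paper uses---the columns $(m^2,1,m)$, $(m,m^2,1)$, $(1,m,m^2)$ of your circulant matrix $C$ are exactly $-\mathbf{v}_3$, $-\mathbf{v}_2$, $\mathbf{v}_1$ in the paper's notation---your verification of minimality takes a genuinely different route. The paper proves that its basis is Minkowski-reduced in the sense of Definition \ref{reduced}: it exhibits an explicit unimodular change of basis, checks $\|\mathbf{v}_2 + x_1\mathbf{v}_1\| \geq \|\mathbf{v}_2\|$ and $\|\mathbf{v}_3 + x_2\mathbf{v}_2 + x_1\mathbf{v}_1\| \geq \|\mathbf{v}_3\|$ by elementary case analysis, and then invokes the theory of reduced bases (the first vector of a reduced basis attains $\lambda_1$). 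You instead establish the basis property by the index argument (columns lie in the lattice and $|\det C| = N^2 = \covol(\Lambda(\Pi_{N,\mathbf{v}}))$, so they generate all of it), compute the Gram matrix $C^{\top}C = (m^4+m^2+1)I + (m^3+m^2+m)(J-I)$, and use the factorisations $m^4-m^3-m+1=(m-1)^2 q$ and $m^3+m^2+m = mq$, $q=m^2+m+1$, to split the quadratic form as $q\bigl[(m-1)^2\|\mathbf{n}\|^2 + m(\sum_i n_i)^2\bigr]$, which you then minimise directly over $\ZZ^3\setminus\{\mathbf{0}\}$. This buys you a self-contained argument: no appeal to reduction theory is needed, the minimisation simultaneously identifies all six shortest vectors, and the two-case analysis (according to whether $\sum_i n_i$ vanishes) is arguably cleaner than the paper's. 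Both arguments isolate the same threshold explaining the hypothesis $m>2$: your inequality $2(m-1)^2 > m^2-m+1$ and the paper's condition \eqref{cond1} each fail precisely at $m=2$. What the paper's template buys in exchange is reusability: the same reduced-basis verification pattern carries over essentially verbatim to Theorems \ref{thm2} and \ref{thm3}, where the relevant bases are no longer circulant and your clean splitting of the Gram form would not be available.
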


\begin{proof}
We prove the theorem by showing that
\begin{equation} \mathbf{v}_1=\begin{pmatrix} 1 \\ m \\ m^2 \end{pmatrix}, 
\mathbf{v}_2=\begin{pmatrix} -m \\ -m^2 \\ -1 \end{pmatrix}, 
\mathbf{v}_3=\begin{pmatrix} -m^2 \\ -1 \\ -m \end{pmatrix} \end{equation}
is a reduced lattice basis for the lattice
\begin{equation*} \begin{pmatrix} 0 & 0 & 1 \\ 0 & m^3-1 & m \\ m^3-1 & 0 & m^2 \end{pmatrix}. \end{equation*}
First, we recall that two lattices with bases $\mathbf{B}$ and $\mathbf{B'}$ are equivalent if there exists a unimodular matrix $\mathbf{U}$ such that $\mathbf{B}\cdot \mathbf{U}=\mathbf{B'}$. We observe that
\begin{equation}
\begin{pmatrix} 1 &-m &-m^2\\ m&-m^2 &-1\\ m^2& -1 &-m\end{pmatrix}
\begin{pmatrix} m & 0&1\\ 1&-m &0\\ 0& 1 &0\end{pmatrix}
=\begin{pmatrix} 0 & 0 & 1 \\ 0 & m^3-1 & m \\ m^3-1 & 0 & m^2 \end{pmatrix}
\end{equation}
And the determinant of the second matrix is indeed 1.
Next, we show that $\mathbf{v}_1, \mathbf{v}_2, \mathbf{v}_3$ form a reduced basis. 
The first condition in Definition \ref{reduced} is obviously satisfied since all three vectors have the same set of entries. The second condition 
$$ \left \| \begin{pmatrix} -m \\ -m^2 \\ -1 \end{pmatrix} + x_1 \begin{pmatrix} 1 \\ m \\ m^2 \end{pmatrix} \right \| =
\left \| \begin{pmatrix} x_1 - m \\ x_1 m- m^2 \\ x_1 m^2 - 1 \end{pmatrix} \right \| 
\geq \left \| \begin{pmatrix} -m \\ -m^2 \\ -1 \end{pmatrix} \right \|$$
follows from the observation that
$$
1+m^2+m^4 + x_1^2 (m^4+m^2+1) -2x_1(m^3 + m^2 + m) \geq 1 + m^2 + m^4,
$$
for all $x_1\in \ZZ$ and $m>2$.
This is obviously true for all $x_1 \geq 2$ and all $x_1 \leq 0$. For $x_1=1$, we need that
\begin{equation} \label{cond1}
m^4+m^2+1 - 2m^3 - 2m^2 - 2m \geq 0,
\end{equation}
which holds for all integers $m>2$.
In a similar way, we can also verify that $\| \mathbf{v}_3 + x_2 \mathbf{v}_2 + x_1 \mathbf{v}_1\| \geq \|\mathbf{v}_3\|$. 
In this case the inequality reduces to
\begin{align*}
(m^4+m^2+1)(1 + x_1^2 +x_2^2) + (m^3+m^2 +m)(2x_1 - 2x_2 - 2x_1 x_2) \geq m^4+m^2+1.
\end{align*}
Since $(x_1-x_2)^2 + 2(x_1 - x_2) \geq 0$ unless $x_1-x_2=-1$, it is easy to see that the inequality holds for $m\geq 1$ and $x_1, x_2$ with $x_1+1\neq x_2$. Now, assume $x_1+1=x_2$; in this case, 
$$ -1 = x_1^2 +(x_1+1)^2  -2x_1^2 - 2x_1 - 2.$$
and the inequality can be reduced to the above \eqref{cond1}.
Hence, the basis is indeed reduced and the length of the shortest vector in the lattice is $\sqrt{1+m^2 + m^4}$.
\end{proof}

\begin{theorem} \label{thm2}
Let $b=2k+1$ be odd and set $N=\frac{b^3-1}{2}$, $\mathbf{v}=(1,b,b^2)$. Then we have
$$ \lim_{k\rightarrow \infty} \frac{\lambda^{\ast}(\Pi_{N,{\mathbf{v}}})}{N^{2/3}} =  2^{1/6} $$
\end{theorem}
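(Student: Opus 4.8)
The plan is to follow the proof of Theorem~\ref{thm1} in structure: produce an explicit basis of the underlying lattice, prove it is Minkowski-reduced, read off the length of its shortest vector, and then let $b\to\infty$. Writing $N=\tfrac{b^3-1}{2}=\tfrac{(b-1)(b^2+b+1)}{2}$, the basis I would propose is
$$\mathbf{v}_1=\begin{pmatrix} -\tfrac{b^2-1}{2}\\ \tfrac{b-1}{2}\\ \tfrac{b^2-b}{2}\end{pmatrix},\quad \mathbf{v}_2=\begin{pmatrix} \tfrac{b-1}{2}\\ \tfrac{b^2-b}{2}\\ -\tfrac{b^2-1}{2}\end{pmatrix},\quad \mathbf{v}_3=\begin{pmatrix} \tfrac{b^2+1}{2}\\ \tfrac{b+1}{2}\\ \tfrac{b^2+b}{2}\end{pmatrix},$$
whose entries are integers exactly because $b=2k+1$ is odd (the same parity that makes $N$ an integer). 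A short computation then gives $\|\mathbf{v}_1\|^2=\|\mathbf{v}_2\|^2=\tfrac{(b-1)^2(b^2+b+1)}{2}=(b-1)N$ and $\|\mathbf{v}_3\|^2=\tfrac{(b^2+1)(b^2+b+1)}{2}$, so the candidate shortest vector is $\mathbf{v}_1$ with $\|\mathbf{v}_1\|=\sqrt{(b-1)N}$. Note that, unlike in Theorem~\ref{thm1}, the generator $(1,b,b^2)=\mathbf{v}_1+\mathbf{v}_3$ is \emph{not} itself the shortest vector.

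First I would check, as in Theorem~\ref{thm1}, that $\{\mathbf{v}_1,\mathbf{v}_2,\mathbf{v}_3\}$ is a basis of the lattice of Lemma~\ref{lem:basis}: it suffices to exhibit an integer matrix $\mathbf{U}$ with $(\mathbf{v}_1\,|\,\mathbf{v}_2\,|\,\mathbf{v}_3)\,\mathbf{U}=M$, where $M$ is the generating matrix with columns $(0,0,N)^{\mathsf T},(0,N,0)^{\mathsf T},(1,b,b^2)^{\mathsf T}$, and to verify $\det\mathbf{U}=\pm1$. The matrix
$$\mathbf{U}=\begin{pmatrix} \tfrac{b-1}{2} & \tfrac{b+1}{2} & 1\\ -1 & b & 0\\ \tfrac{b-1}{2} & \tfrac{b-1}{2} & 1\end{pmatrix}$$
does the job and has determinant $1$. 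The geometric core of the argument is the trio of inner-product identities
$$\mathbf{v}_1\cdot\mathbf{v}_2=-\tfrac12\|\mathbf{v}_1\|^2,\qquad \mathbf{v}_3\cdot\mathbf{v}_1=0,\qquad \mathbf{v}_3\cdot\mathbf{v}_2=-\tfrac12\|\mathbf{v}_1\|^2,$$
which I would verify directly. They say that $\mathbf{v}_1,\mathbf{v}_2$ are equal-length vectors meeting at $120^\circ$ and spanning the plane $x+y+z=0$, while $\mathbf{v}_3$ lies above the deep hole of the hexagonal lattice they generate; this is precisely the face-centred (ABC-stacking) configuration, and it is exactly here that the value $N=\tfrac{b^3-1}{2}$ matters, as opposed to other divisors of $b^3-1$ (compare the failures in Table~\ref{table1}).

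Given these identities, Minkowski reduction (Definition~\ref{reduced}) reduces to elementary inequalities that no longer involve $b$. Condition (B.1) holds because $\|\mathbf{v}_3\|^2-\|\mathbf{v}_1\|^2=b(b^2+b+1)>0$. For (B.2), the first inequality becomes $\|\mathbf{v}_1\|^2(x_1^2-x_1)\ge0$, while, substituting the three inner products, the second becomes
$$\|\mathbf{v}_3+x_2\mathbf{v}_2+x_1\mathbf{v}_1\|^2-\|\mathbf{v}_3\|^2=\|\mathbf{v}_1\|^2\bigl(x_1^2+x_2^2-x_1x_2-x_2\bigr)\ge0,$$
both of which hold for all integers $x_1,x_2$ (the second by completing the square, or by treating the cases $x_2\in\{0,1\}$ separately and using $x_2(3x_2-4)\ge0$ otherwise). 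Hence the basis is reduced and, by Lemma~\ref{lem:short}, $\lambda^\ast(\mlattice)=\lambda_1=\|\mathbf{v}_1\|=\sqrt{(b-1)N}$.

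Passing to the limit then gives
$$\frac{\lambda^\ast(\mlattice)}{N^{2/3}}=\sqrt{\frac{(b-1)N}{N^{4/3}}}=\sqrt{\frac{2^{1/3}(b-1)}{(b^3-1)^{1/3}}}\;\longrightarrow\;\sqrt{2^{1/3}}=2^{1/6}\qquad(b=2k+1\to\infty),$$
because $(b-1)/(b^3-1)^{1/3}\to1$. The genuine obstacle is not any single estimate but finding the reduced basis in the first place: one must realise that all short vectors lie in the plane $x+y+z=0$, that the orthogonal direction $(1,1,1)$ is available only in steps of height $\tfrac{b^2+b+1}{\sqrt3}$, and that $\mathbf{v}_3$ must therefore be pinned to the deep hole of the planar layer. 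Once the basis and the three inner-product identities are guessed correctly, everything else is routine verification.
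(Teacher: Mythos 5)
Your proposal is correct and follows essentially the same route as the paper: exhibit an explicit basis of the lattice from Lemma \ref{lem:basis}, certify it via a unimodular matrix, verify the Minkowski reduction conditions of Definition \ref{reduced}, and read off $\lambda^{\ast}=\sqrt{(b-1)N}$ before taking the limit. Your basis is in fact the paper's basis up to a unimodular change (your $\mathbf{v}_3$ equals the paper's, your $\mathbf{v}_2$ is the negative of the paper's $\mathbf{v}_1$, and your $\mathbf{v}_1$ is the sum of the paper's first two vectors), so the only cosmetic difference is your cleaner organisation of the reduction inequalities through the three inner-product identities.
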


\begin{proof}
We prove the theorem by showing that
\begin{equation} \mathbf{v}_1=\begin{pmatrix} -k \\ -k(2k+1) \\ k+k(2k+1) \end{pmatrix}, 
\mathbf{v}_2=\begin{pmatrix} -k(2k+1) \\ k+k(2k+1) \\ -k \end{pmatrix}, 
\mathbf{v}_3=\begin{pmatrix}  k+k(2k+1) +1\\ k+1  \\ k+k(2k+1) +k +1 \end{pmatrix} \end{equation}
is a reduced lattice basis for the lattice
\begin{equation*} \begin{pmatrix} 0 & 0 & 1 \\  0&\frac{(2k+1)^3-1}{2} & (2k+1) \\  \frac{(2k+1)^3-1}{2} & 0&(2k+1)^2 \end{pmatrix}. \end{equation*}
First, we observe that
\begin{equation}
(\mathbf{v}_1, \mathbf{v}_2, \mathbf{v}_3) \cdot 
\begin{pmatrix}  1+k& -k& 1\\ k &1+k &1\\ k&k &1 \end{pmatrix}
= \begin{pmatrix} 0 & 0 & 1 \\  0 &\frac{b^3-1}{2} & b \\ \frac{b^3-1}{2}&0  & b^2 \end{pmatrix}
\end{equation}
And the determinant of the second matrix is indeed 1.
Next we have to check that $\| \mathbf{v}_2 + x_1 \mathbf{v}_1\| \geq \| \mathbf{v}_2\|$. 
We have that $\| \mathbf{v}_2\|^2 = 2k^2 (3 + 6k + 4k^2)$ and the above inequality reduces to
$$ 2k^2 (3 + 6k + 4k^2) (1-x_1+x_1^2) \geq 2k^2 (3 + 6k + 4k^2), $$
which is true for $k\geq 1$ and all integers $x_1$.
Finally, we have to show that
$$\|  \mathbf{v}_3 + x_2  \mathbf{v}_2 + x_1 \mathbf{v}_1 \| \geq \| \mathbf{v}_3 \|$$ 
for all integers $x_1, x_2$.
We have that $\|\mathbf{v}_3 \|^2 =(1+2k+2k^2)(3 + 6k + 4k^2)$.
Setting $\alpha=3 + 6k + 4k^2$, the inequality can be simplified to
$$ (1+2k +2k^2) \alpha + 2k^2 \alpha (x_1^2 + x_2^2 + x_1 - x_2 - x_1 x_2) \geq (1+2k+2k^2) \alpha,$$
and hence to
$$x_1^2 + x_2^2 + x_1 - x_2 - x_1 x_2 \geq 0,$$
Depending on the signs of $x_1$ and $x_2$ 
we either rewrite this expression as $x_1^2 + x_2^2 + x_1- x_2(1+x_1)$ in case $x_1$ and $x_2$ have the same sign
or as $x_1^2 + x_2^2 + x_1(1-x_2) -x_2$ in case $x_1$ and $x_2$ have opposite signs to see that this inequality holds in each case and for all pairs of integers $x_1, x_2$.
Thus, we get that
$$\lim_{k\rightarrow \infty} \frac{\lambda^{\ast}(\Pi_{N,{\mathbf{v}}})}{N^{2/3}} = \lim_{k \rightarrow \infty} \frac{\sqrt{2}\sqrt{k^2(3 + 6k + 4k^2)}}{(k (3 + 6 k + 4 k^2))^{2/3}} = 2^{1/6}.$$
\end{proof}

\begin{figure}
\includegraphics[scale=0.5]{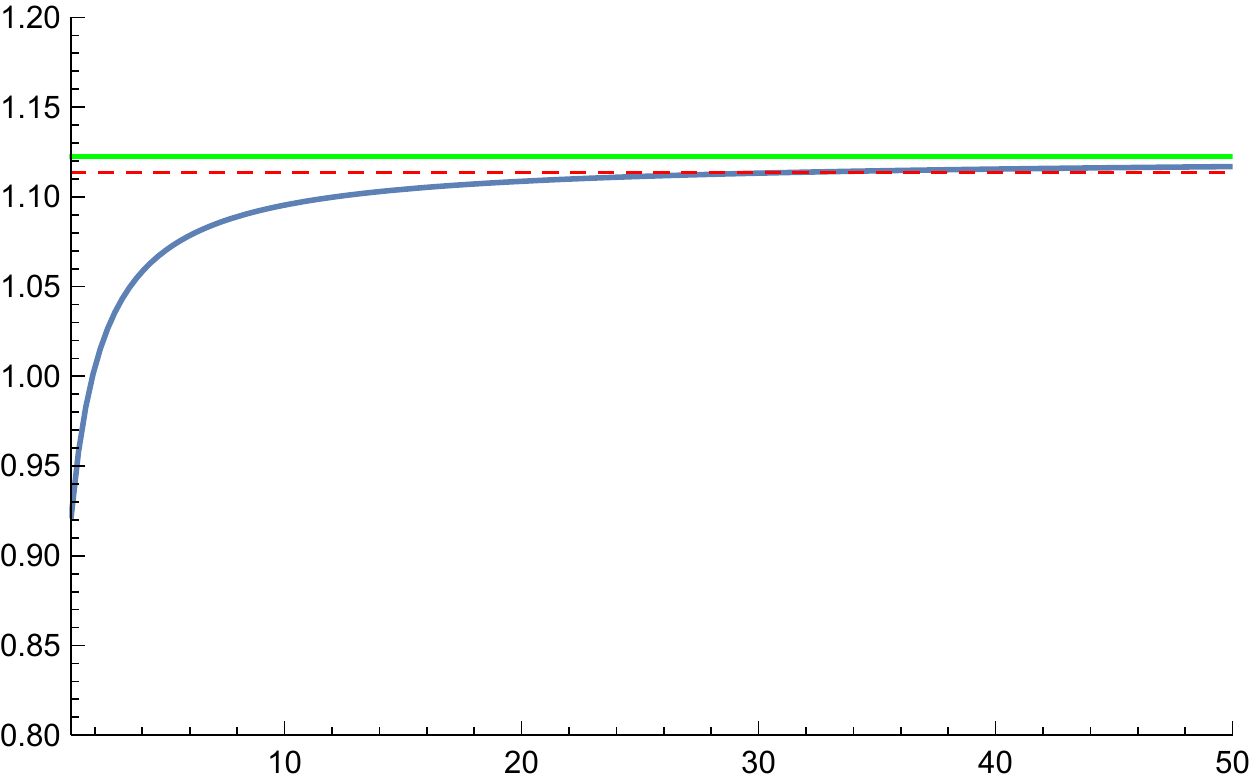} 
\caption{Illustration of the convergence of the normalised length of the shortest vector of $\Pi_{N,\mathbf{v}}$ to $2^{1/6}$. Here, $2N=(2k+1)^3-1$ and the $x$-axis represents $k$; i.e. for $k=50$ we have $N=515150$ and $\mathbf{v}=(1,101,10201)$.
The red dashed line is at $1.1136$; i.e. at the value obtained for $N=244$ with $(1,13,169)$.} \label{fig:convergence}.
\end{figure}

\begin{remark}
We can calculate the value of $k$ needed to get a lattice whose normalised shortest vector is longer than for the exceptional lattice $\Pi=\Pi_{244,(1,13,169)}$. In \eqref{equ1} we calculated that $\lambda^{\ast}(\Pi)/244^{2/3}=\sqrt{1891}/244^{2/3}\approx 1.1136$, which is only improved for $k>31$ or $N\geq 137312$; see Figure \ref{fig:convergence}.
\end{remark}

\section{Higher dimensions}
\label{sec:multiD}

It is possible to generalise our construction to higher dimensions. 
We define $\mlatticeI$ and $\mlatticeII$ analogously to the two- and three-dimensional cases for parameter vectors
$$ \mathbf{v} = (a_1, \ldots, a_d),$$
with $d=4$ or $d=5$, $0<a_1< \ldots < a_d <N$ and $\gcd(N,a_1)=1$. Furthermore, we define
$ \lambda^{\ast}(\mlatticeI)$ and $ \lambda^{\ast}(\mlatticeII)$ as in the three dimensional case.
Using a similar volume argument as in the introduction for the three dimensional case, we get that
$$ \lambda^{\ast}(\mlatticeI) \leq C_4 \cdot N^{3/4} \ \ \ \text{ and } \ \ \ \lambda^{\ast}(\mlatticeII) \leq C_5 \cdot N^{4/5}. $$
Lemma \ref{lem:basis} generalises as well and, therefore, we see that $\covol(\Lambda(\mlatticeI))=N^3$ and $\covol(\Lambda(\mlatticeII))=N^4$. Thus, we get from the definition of the Hermite constant that
\begin{align*} 
\lambda^{\ast}(\mlatticeI) &\leq \covol(\Lambda(\mlatticeI))^{1/4} \cdot \sqrt{\gamma_4}  = N^{3/4} \cdot 2^{1/4}, \\
\lambda^{\ast}(\mlatticeII) &\leq \covol(\Lambda(\mlatticeII))^{1/5} \cdot \sqrt{\gamma_5}  = N^{4/5} \cdot 2^{3/10}. \\
\end{align*}

We can mimic Theorem \ref{thm2} to obtain the following result for $d=4$ and $d=5$.
\begin{theorem} \label{thm3}
Let $b=2k+1$ be odd and set $N=\frac{b^4-1}{2}$, $\mathbf{v}=(1,b,b^2,b^3)$. Then we have
$$ \lim_{k\rightarrow \infty} \frac{\lambda^{\ast}(\mlatticeI)}{N^{3/4}} =  2^{1/4}. $$
Similarly, let $b=2k+1$ be odd and set $N=\frac{b^5-1}{2}$, $\mathbf{v}=(1,b,b^2,b^3,b^4)$. Then we have
$$ \lim_{k\rightarrow \infty} \frac{\lambda^{\ast}(\mlatticeII)}{N^{4/5}} =  2^{3/10}. $$
\end{theorem}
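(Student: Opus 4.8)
The plan is to mimic the proof of Theorem \ref{thm2} in each of the two cases, the only genuinely new ingredient being the larger explicit reduced bases and the bookkeeping they demand. First I would record the analogue of Lemma \ref{lem:basis}: for $\mathbf{v}=(1,b,\ldots,b^{d-1})$ the underlying lattice is generated by a matrix whose last column is $(1,b,\ldots,b^{d-1})^{T}$ and whose remaining columns carry the entries $N$, so that $\covol=N^{d-1}$ and the Hermite bound already established gives the correct value of $\limsup_{k}\lambda^{\ast}/N^{(d-1)/d}$. The decisive structural fact is that $b^{d}=2N+1\equiv 1\pmod N$, so that the cyclic shift of the coordinates $\sigma$ is a lattice automorphism and an isometry; I would exploit this both to produce candidate short vectors and to collapse many of the reduction inequalities into a single one.

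Next I would write down the candidate shortest vector. Reducing the lattice point $(k,kb,\ldots,kb^{d-1})$ coordinatewise into the balanced range $\left(-N/2,N/2\right]$ yields
$$ \mathbf{u}=\left(k,\;kb,\;\ldots,\;kb^{\,d-2},\;-\tfrac{b^{d-1}-1}{2}\right), $$
since $kb^{j}=\tfrac{b^{j+1}-b^{j}}{2}<N/2$ for $0\le j\le d-2$ while the last coordinate wraps around exactly once. Using $k=\tfrac{b-1}{2}$ and $\sum_{j=0}^{d-1}b^{j}=N/k$ one checks that the coordinates of $\mathbf{u}$ sum to zero and that
$$ \|\mathbf{u}\|^{2}=k^{2}\,\frac{b^{2d-2}-1}{b^{2}-1}+\frac{(b^{d-1}-1)^{2}}{4}. $$
Because $\mathbf{u}$ is a sum-zero vector, the $d$ shifts $\mathbf{u},\sigma\mathbf{u},\ldots,\sigma^{d-1}\mathbf{u}$ add up to the zero vector and hence are linearly dependent; I would therefore take as candidate basis any $d-1$ of these shifts together with one completing vector $\mathbf{v}_{d}$ of nonzero coordinate sum, obtained from the generator $(1,b,\ldots,b^{d-1})$ after subtracting the optimal combination of the shifts, exactly as $\mathbf{v}_{3}$ arose for $d=3$.

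With the basis in hand the verification splits as in Theorem \ref{thm2}. I would first exhibit the explicit integer change-of-basis matrix $\mathbf{U}$ (now $4\times4$ and $5\times5$) carrying this basis to the generating matrix and check $\det\mathbf{U}=\pm1$, a direct determinant evaluation that simultaneously confirms the $d-1$ shifts plus $\mathbf{v}_{d}$ really span the whole lattice. I would then verify the Minkowski reduction conditions: the analogue of (B.1) is immediate, since the $d-1$ shifts are isometric and $\mathbf{v}_{d}$ is the longest vector, while the analogue of (B.2) reduces, after cancelling the common factor $\|\mathbf{u}\|^{2}$, to showing that a family of quadratic forms in the integer coefficients $x_{1},\ldots,x_{d-1}$ is nonnegative; these are the higher-dimensional counterparts of the inequalities $x_{1}^{2}-x_{1}+1\ge 0$ and $x_{1}^{2}+x_{2}^{2}+x_{1}-x_{2}-x_{1}x_{2}\ge 0$ met for $d=3$. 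The circulant symmetry coming from $\sigma$ forces most of these forms to coincide, so only a bounded number of genuinely distinct cases survive, each treated by completing the square and checking the few small values of the $x_{i}$ by hand. Once reduction is established, $\mathbf{u}$ is a shortest vector, $\lambda^{\ast}=\|\mathbf{u}\|$, and substituting the closed form above and letting $k\to\infty$ produces $2^{1/4}$ and $2^{3/10}$ respectively.

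The step I expect to be the real obstacle is the reduction check for $d=5$. In dimension five a Minkowski-reduced basis need not attain the higher successive minima, and the defining inequalities form a larger and less symmetric family than the conditions available for $d\le 4$, so one must make sure that testing the appropriate finite set of coefficient vectors (the van der Waerden conditions, see \cite{waerden}) genuinely certifies reduction, and in particular that the cross terms between the completing vector $\mathbf{v}_{5}$ and the four shifts remain controlled. Since the conclusion concerns only the first minimum $\lambda_{1}$ --- always the length of the first vector of any Minkowski-reduced basis --- it is enough to certify reduction, and the quadratic inequalities that arise, though numerous, are all of the benign type already encountered in Theorem \ref{thm2}.
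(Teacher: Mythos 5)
Your proposal is correct and follows essentially the same route as the paper's own sketch: your balanced reduction of $(k,kb,\ldots,kb^{d-1})$ is exactly (up to sign) the first vector of the explicit Minkowski-reduced bases the paper exhibits, the other short basis vectors are precisely its cyclic shifts plus one completing vector, and the verification plan --- unimodular change of basis to the generating matrix, then the finite reduction conditions of \cite{stehle}/\cite{waerden}, then the limit computation, which your closed form for $\|\mathbf{u}\|^2$ reproduces --- is the paper's argument. The only differences are cosmetic: you make the cyclic-shift automorphism and the sum-zero dependency explicit, and you correctly note that for $d=5$ it suffices that the first vector of a Minkowski-reduced basis attains $\lambda_1$, which streamlines the paper's appeal to van der Waerden's four-minima result (though be aware that, unlike in Theorem \ref{thm2}, the Gram entries for $d\geq 4$ are not all exact multiples of $\|\mathbf{u}\|^2$, so the reduction inequalities keep $k$-dependent coefficients rather than collapsing to the clean integer forms you anticipate).
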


\begin{proof}[Sketch of the proof]
We start with $d=4$. Analogously to the proof of Theorem \ref{thm2} we can show that
\begin{equation} \mathbf{v}_1=\begin{pmatrix} -k \\ -kb \\ -kb^2 \\k+kb+kb^2 \end{pmatrix}, 
\mathbf{v}_2=\begin{pmatrix} -kb \\ -kb^2 \\ k+kb+kb^2 \\ -k \end{pmatrix}, 
\mathbf{v}_3=\begin{pmatrix} -k-kb-kb^2 \\ k \\ kb \\ kb^2 \end{pmatrix}, 
\mathbf{v}_4=\begin{pmatrix}  k+kb+1\\ 2k+kb+kb^2+1  \\ k+kb+1\\ 2k+kb+kb^2+1\end{pmatrix} \end{equation}
is a Minkowski-reduced lattice basis (see \cite[Section 2.2]{stehle} for a definition of reduced basis in more than 3 dimensions and \cite[Theorem 2.2]{stehle} for a list of conditions to check) for the lattice
\begin{equation*} \begin{pmatrix} 0 & 0 & 0 & 1 \\  0& 0& \frac{(2k+1)^4-1}{2} & (2k+1) \\ 0& \frac{(2k+1)^4-1}{2} &0 & (2k+1)^2 \\   \frac{ (2k+1)^4-1}{2} & 0& 0 &(2k+1)^3 \end{pmatrix}. \end{equation*}
From this we get that
$$\lim_{k\rightarrow \infty} \frac{\lambda^{\ast}(\mlatticeI)}{N^{3/4}} = 
\frac{k \sqrt{\left(4 k^2+6 k+3\right) (2 k (k+1)+1)}}{\sqrt{2} (k (k+1) (2 k (k+1)+1))^{3/4}} = 2^{1/4}.$$

Finally, we can construct $5$-dimensional lattices in a similar fashion; i.e. let $b=2k+1$ be odd and set $N=\frac{b^5-1}{2}$, $\mathbf{v}=(1,b,b^2,b^3,b^4)$.
It can be shown that 
{\small \begin{equation*} \mathbf{v}_1=\begin{pmatrix} -k \\ -k b \\ -k b^2 \\ -k b^3 \\ 4k(k + 1)(2k^2 + 2k + 1) \end{pmatrix}, 
\mathbf{v}_2=\begin{pmatrix} k b^2\\ k b^3\\ -4k(k + 1)(2k^2 + 2k + 1)\\ k\\ k b \end{pmatrix}, 
\mathbf{v}_3=\begin{pmatrix} -k b^3 \\ 4k(k + 1)(2k^2 + 2k + 1)\\ -k \\ -k b\\ -k b^2 \end{pmatrix},
\end{equation*}
\begin{equation*}
\mathbf{v}_4=\begin{pmatrix}  -4k(k + 1)(2k^2 + 2k + 1)\\ k\\ k b\\ k b^2\\ k b^3 \end{pmatrix}, 
\mathbf{v}_5=\begin{pmatrix}  (2k^2 + 2k + 1)b\\ (2k^2 + 2k + 1) b^2\\ (4k^2 + 2k + 1)(k + 1)\\ (4k^2 + 2k + 1)(k + 1)b\\ 2k^2 + 2k + 1\end{pmatrix} 
\end{equation*} }
is a Minkowski-reduced lattice basis for the lattice
\begin{equation*} \begin{pmatrix} 0&0 & 0 & 0 & 1 \\  0& 0& 0& \frac{(2k+1)^5-1}{2} & (2k+1) \\ 0&0& \frac{(2k+1)^5-1}{2} &0 & (2k+1)^2 
\\ 0& \frac{(2k+1)^5-1}{2} &0 &0& (2k+1)^3
\\   \frac{ (2k+1)^5-1}{2} & 0& 0 &0 &(2k+1)^4 \end{pmatrix}. \end{equation*}
Having a Minkowski-reduced basis ensures, by a result of van der Waerden \cite{waerden}, that the four shortest vectors in the basis achieve the first four successive minima of the lattice.
Thus, we see that the shortest vector is of the form 
$$ (-k, \ -k b,\ -k b^2,\ -k b^3,\ k + k b + k b^2 + k b^3), $$
such that 
$$\lim_{k\rightarrow \infty} \frac{\lambda^{\ast}(\mlatticeII)}{N^{4/5}} = 2^{3/10}.$$
\end{proof}

\section*{Acknowledgments}
I would like to thank one anonymous reviewer who brought \cite{stehle} to my attention, who inspired the results of Section \ref{sec:multiD} and whose very constructive and detailed feedback greatly improved an earlier version of this manuscript. 


\end{document}